\documentclass[11pt]{amsart}
\usepackage{amssymb,latexsym,amscd, amsthm, epsfig}

\newtheorem{Thm}{Theorem}[section]
\newtheorem{Cor}[Thm]{Corollary}
\newtheorem{Prop}[Thm]{Proposition}
\newtheorem{Lem}[Thm]{Lemma}
\theoremstyle{definition}
\newtheorem{Def}[Thm]{Definition}
\newtheorem{Ex}[Thm]{Example}
\newtheorem{Rmk}[Thm]{Remark}

\newtheorem{Conj}[Thm]{Conjecture}

\date{}

\newcommand{\OO}{{\mathcal{O}}}
\newcommand{\PP}{{\mathbb{P}}}
\newcommand{\CC}{{\mathbb{C}}}



\begin{document}

\title{On the identifiability of binary Segre products}

   \author{Cristiano Bocci}
   \address{Cristiano Bocci\\
Universit\'a degli Studi di Siena\\
     Dipartimento di Scienze Matematiche e Informatiche\\
     Pian dei Mantellini, 44\\
     I -- 53100 Siena
     }
   \email{bocci24@unisi.it}
     \urladdr{http://www.mat.unisi.it/newsito/docente.php?id=16}

\author{Luca Chiantini}
      \address{Luca Chiantini\\
Universit\'a degli Studi di Siena\\
     Dipartimento di Scienze Matematiche e Informatiche\\
     Pian dei Mantellini, 44\\
     I -- 53100 Siena
     }
   \email{chiantini@unisi.it}
  \urladdr{http://www.mat.unisi.it/newsito/docente.php?id=4}
   
\maketitle {\footnotesize \emph{\textbf{Abstract. We prove that a product of $m>5$ copies of $\PP^1$, embedded in the projective
space $\PP^r$ by the standard Segre embedding, is $k$-identifiable (i.e.
a general point of the secant variety $S^k(X)$ is contained in
only one $(k+1)$-secant  $k$-space), for
all $k$ such that $k+1\leq 2^{m-1}/m$.
} \\ \\
\indent \textbf{MSC. 14M12, 14N05, 62H17}}
\\ ${}$
\maketitle

\section{Introduction}

In this paper, we study secant varieties $S^k$ of Segre products
of projective spaces, with special focus on products of many copies
of $\PP^1$ (binary Segre products or Bernoulli models, in Algebraic Statistics). 
We are mainly concerned with the number of secant spaces passing through
a  general point of a secant variety.

In the literature, one finds several methods for computing the
dimension of secant varieties of products. Let us just mention  the
inductive method introduced by Abo, Ottaviani and Peterson 
in \cite{AOP}, which provides a procedure for detecting
when the dimension coincides with the expected one.
In the specific case of products of copies of $\PP^1$, a complete description
of the dimension of secant varieties has been obtained by Catalisano, Geramita
and Gimigliano in \cite{CGG1}
and \cite{CGG2}. When the number of copies $m$ of $\PP^1$ is bigger than $4$, they
prove that $S^k$ has always the expected dimension.
From our point of view, the result  implies
that, when the secant variety $S^k$ does not fill the ambient space and $m>4$, then
through a general point of $S^k$ one finds only finitely many $(k+1)$-secant 
$k$-spaces. In this paper, we go one step further and we ask  {\it how many} secant
spaces one finds through a general point of $S^k$.  Our main result is:

\begin{Thm}\label{main} 
Let $X$ be a product of $m>5$ copies of $\PP^1$, embedded in the projective
space $\PP^r$, $r=2^m-1$, by the standard Segre embedding.
Let $S^k(X)$ be the $k$-th secant variety of $X$, generated by $(k+1)$-secant $k$-spaces. If $k+1\leq 2^{m-1}/m$,
 then a general point of $S^k(X)$ is contained in
only one $(k+1)$-secant  $k$-space.\end{Thm}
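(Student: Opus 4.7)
The standard route to identifiability, following Chiantini--Ciliberto, replaces the count of secant spaces through a general point of $S^k(X)$ by a local condition at a general $(k+1)$-tuple of points of $X$. Concretely, once the secant variety has the expected dimension (established for $m>4$ by Catalisano--Geramita--Gimigliano), it suffices to verify that for general $p_1,\dots,p_{k+1}\in X$ the tangential contact locus --- the set of points of $X$ at which a general hyperplane containing $T_{p_1}X,\dots,T_{p_{k+1}}X$ is tangent to $X$ --- coincides scheme-theoretically with $\{p_1,\dots,p_{k+1}\}$. This reduces Theorem~\ref{main} to a tangency statement about the linear system of such hyperplanes.

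Rephrasing cohomologically: on $X=(\PP^1)^m$ we have the Segre line bundle $L=\OO(1,\dots,1)$ with $h^0(L)=2^m$, and a hyperplane tangent at every point of $Z=\{p_1,\dots,p_{k+1}\}$ corresponds to a section of $L\otimes \II_Z^2$. Each double-point condition imposes $m+1$ linear conditions, so the bound $(k+1)m \leq 2^{m-1}$ guarantees $(k+1)(m+1)<2^m$, giving a nonempty linear system with nontrivial residual freedom. The identifiability condition then amounts to the statement that no section in this system acquires an extra double point anywhere else on $X$.

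To prove this I would proceed by induction on $m$, specialising the first $\PP^1$-coordinates of roughly half of the $p_i$ to $0$ and the remaining ones to $\infty$, thereby placing each group inside one of the two sub-Segre varieties $\{0\}\times(\PP^1)^{m-1}$ and $\{\infty\}\times(\PP^1)^{m-1}$. Restricting sections of $L$ to each such sub-Segre produces a linear system of exactly the same shape in one fewer factor, and the bound $k+1 \leq 2^{m-1}/m$ is tailored precisely so that the split configuration, with about $(k+1)/2$ points in each slice, falls within the analogous inductive bound $(k+1)/2 \leq 2^{m-2}/(m-1)$ (using $m-1<m$). The hypothesis $m>5$ provides the slack needed both for the inductive step and for the base cases, which can be checked by a direct Terracini-style rank computation.

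The main obstacle will be to argue that any hypothetical extra tangency point $q$ of the general hyperplane on the original $X$ must, upon the specialisation above, restrict to an extra tangency on one of the two sub-Segre varieties, contradicting the inductive hypothesis. The delicate part is to control how $q$ behaves during the degeneration: one must ensure that $q$ does not escape to a boundary configuration (for instance to the locus where the two slices meet in the limit, or to the indeterminacy locus of the restriction map) and that the order of tangency is correctly transferred to the slice. Once this specialisation is handled cleanly, the tangential contact locus on $X$ is forced to be the expected zero-dimensional reduced scheme $Z$, and $k$-identifiability follows from the Chiantini--Ciliberto criterion.
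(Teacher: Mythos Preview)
Your outline is a point-degeneration induction on $m$: send roughly half of the $p_i$ into the fibre $\{0\}\times(\PP^1)^{m-1}$ and the rest into $\{\infty\}\times(\PP^1)^{m-1}$, then invoke the inductive hypothesis on each slice. The step you yourself label ``the main obstacle'' is a genuine gap, not a detail. Suppose, after specialisation, a general hyperplane $H\supset T_{p_1}X+\dots+T_{p_{k+1}}X$ acquires an extra tangency at $q=(\lambda,q')$ with $\lambda\notin\{0,\infty\}$. Restricting $H$ to either fibre $\{0\}\times(\PP^1)^{m-1}$ or $\{\infty\}\times(\PP^1)^{m-1}$ simply does not see $q$: the induced hyperplane on each slice can perfectly well have contact locus equal to the half of the $p_i$ lying there, so the inductive hypothesis yields no contradiction. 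You would need an extra mechanism---tracking $q$ through a one-parameter family as the $p_i$ degenerate, or an exact-sequence argument on $H^0(L\otimes\II_Z^2)$ relating the total space to the two fibres---and none is supplied. A second issue: your induction hypothesis is ``$(\PP^1)^{m-1}$ is not $k'$-weakly defective'', which is itself the hard statement; the result available in the literature (Catalisano--Geramita--Gimigliano) is only ``not $k'$-defective'', and the base case at $m-1=5$ is in fact $4$-weakly defective, so the induction cannot start where you want it to.

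The paper's argument is structurally different and avoids all of this. It never moves the points. Writing $X=Y\times\PP^1$ with $Y=(\PP^1)^{m-1}$, it assumes $X$ is $k$-weakly defective and studies the $(k+1)$-contact variety $W\subset X$ directly: projecting $W$ to $Y$ and using only the \emph{non-defectivity} of $Y$ (the CGG2 input) together with Theorem~\ref{wdef}, a span/dimension count forces $W$ to be a union of $k+1$ linearly independent linear subspaces. Such a $W$ has $k$-th secant order $1$ (Example~\ref{linsp}), so Theorem~\ref{mu2} gives identifiability regardless. The gain is twofold: the hypothesis needed on the smaller product is the weaker ``not $k$-defective'' (already known), and there is no specialisation or limit to control.
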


Following a notation suggested by applications to Algebraic Statistics,
we say that a variety $X$ is {\it $k$-identifiable} when through a
{\it general} point of the secant variety $S^k(X)$, there is only one
$(k+1)$-secant $k$-space. (Those who would prefer "generically $k$-identifiable" here,
should consider that there are {\it always} points of $S^k(X)$, e.g. points
of $X$, for which the number jumps to infinity.)  
With this notation, our result can be rephrased by saying that a product
of $m>5$ copies of $\PP^1$ is $k$-identifiable, as soon as $k+1\leq 2^{m-1}/m$ (i.e. 
$m - log_2(m) \geq \lceil \log_2 (k+1) \rceil +1$).
\smallskip

 From this last point of view, $k$-identifiability has been studied
because of its application to Algebraic Statistics and other fields.
Using methods of Algebraic Geometry, Elmore, Hall and Neeman proved in
\cite{EHN} the following asymptotic result: when the number $m$ of copies of
$\PP^1$ is ``very large" with respect to $k$, then the binary Segre product is
$k$-identifiable. 

As far as we know, the best bound  for identifiability of binary products
has been obtained by Allman, Matias and Rhodes 
in \cite{AMR} (Corollary 5). They prove that the product is $k$-identifiable when
$m> 2\lceil\log_2 (k+1)\rceil +1$. Thus, they
give a lower bound for $2^m$ which is quadratic with respect to $k+1$. Our theorem provides an extension of these results. In order to compare with the aforementioned bounds, notice that 
$(\PP^1)^m$ cannot be $k$-identifiable for $k>2^m/(m+1) -1$, by a simple
dimensional count, explained in Section \ref{background}.
Thus, the maximal $k$ for which identifiability makes sense is
$k_{max}=\lfloor 2^m/(m+1) \rfloor -1$. The result of Allman, Matias and Rhodes,
rewritten from this point of view, says that $(\PP^1)^m$ is 
$k$-identifiable for $k+1\leq 2^{(m-1)/2}$.  Our theorem extends this bound, for we prove that:
$$ (\PP^1)^m \mbox{ is  $k$-identifiable for } k+1\leq \frac{2^{m-1}}{m}.$$  

Still more or less half way from the maximum,
but a sensible improvement, anyway. For example, for  $m=10$, $k_{max}$ is $92$, our bound 
proves the $k$-identifiability for $k\leq 50$, while  Allman, 
Matias and Rhodes give identifiability for $k\leq 16\sqrt 2$.
\smallskip

Our method is strongly based on the result on the dimension
of secant varieties, contained in \cite{CGG2}. Indeed, for a variety $X$, both the
dimension of secant varieties and the number of secant spaces 
passing through a point, are linked to the existence of very 
degenerate subvarieties passing through $k+1$ general points
of $X$. We explain this fact in details, through the next section.
Using this remark, transferring results on the dimension
of secant varieties to results on the identifiability,
becomes straightforward. At the end of the paper, we will 
explain why we need the assumption $m>5$.
Namely, we prove that $(\PP^1)^5$ is not $4$-identifiable.

Let us finish by stating the following conjecture, suggested by
our analysis.

\begin{Conj}\label{conge} For $m>5$ and for all $k=1,\dots,
\lfloor 2^m/(m+1) \rfloor -1$, the binary Segre product
$(\PP^1)^m$ is $k$-identifiable.
\end{Conj}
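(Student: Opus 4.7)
The plan is to reduce $k$-identifiability of $X = (\PP^1)^m$ to a geometric question about the contact locus of a general tangent hyperplane, and then deploy the non-defectivity theorem of \cite{CGG2}. The introduction hints at exactly this route: ``transferring results on the dimension of secant varieties to results on the identifiability''. I will formalize that transfer through a Chiantini--Ciliberto type criterion.

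First, by Terracini's lemma, for a general point $q \in S^k(X)$ written as a combination of points $p_0,\dots,p_k \in X$, the tangent space $T_q S^k(X)$ is the span of the tangent spaces $T_{p_i}X$. Since \cite{CGG2} establishes that $S^k(X)$ has the expected dimension for $m>4$, only finitely many $(k+1)$-secant $k$-planes pass through a general $q$, and the question is whether this finite number equals one. The standard reduction is: $X$ is $k$-identifiable provided the general hyperplane $H$ containing $T_{p_0}X + \cdots + T_{p_k}X$ cuts $X$ in a scheme whose singular locus is exactly $\{p_0,\dots,p_k\}$ (rather than containing a positive-dimensional ``contact subvariety'' through the $p_i$).

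Next, I would translate this into multilinear algebra. Hyperplane sections of $(\PP^1)^m$ correspond to multihomogeneous forms of multidegree $(1,1,\dots,1)$, and tangency at $p_i$ corresponds to vanishing doubly at $p_i$. Thus one studies the linear system $\LL$ of $(1,\dots,1)$-forms with $k+1$ double basepoints in general position. The expected projective dimension is $2^m - (k+1)(m+1) - 1$, which \cite{CGG2} shows is attained for $m>4$. The hypothesis $k+1 \leq 2^{m-1}/m$ guarantees $\LL$ has ample positive dimension, so its general member is in very general position; I would then want to show that the base locus of $\LL$ is set-theoretically just the $k+1$ double points, and that the general member is smooth away from them.

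The main obstacle --- and the step where the specific bound $k+1 \leq 2^{m-1}/m$ must enter --- is ruling out a positive-dimensional ``contact variety'' $Y \subset X$ through the $p_i$ along which every member of $\LL$ is tangent. Such a $Y$ would be a ``very degenerate'' subvariety passing through $k+1$ general points, most naturally a product $(\PP^1)^{m'} \subset (\PP^1)^m$ obtained by fixing certain factors, or a union of such. The idea is that restricting the system $\LL$ to $Y$ produces a linear system on a smaller Segre whose dimension can be counted exactly; imposing tangency along $Y$ at $k+1$ general points forces an inequality that contradicts $k+1 \leq 2^{m-1}/m$ (the factor $2^{m-1}$ being precisely the dimension of the space of $(1,\dots,1)$-forms on an $(m{-}1)$-fold product, and $m$ counting the relevant ``directions''). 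I expect this step to require a case analysis, or an inductive argument on $m$, combined with a careful Bertini-type statement ensuring that singularities of general hyperplane sections outside the $p_i$ cannot occur. The assumption $m>5$ should enter here as the threshold at which the degenerate configurations responsible for the failure of $4$-identifiability of $(\PP^1)^5$ are excluded.
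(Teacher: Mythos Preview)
The statement you are attempting to prove is labelled \emph{Conjecture} in the paper, and the paper does not prove it; it is posed as an open problem (``Let us finish by stating the following conjecture, suggested by our analysis''). So there is no proof in the paper to compare against.

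More importantly, your proposal does not address the conjecture at all: throughout, you invoke the hypothesis $k+1\leq 2^{m-1}/m$, which is the bound appearing in the paper's \emph{Main Theorem} (Theorem~\ref{main}), not the bound $k\leq \lfloor 2^m/(m+1)\rfloor -1$ of the conjecture. The conjectural range is essentially twice as large, and nothing in your sketch explains how to reach beyond $k+1\leq 2^{m-1}/m$. Your key step --- ruling out a positive-dimensional contact variety by restricting to a smaller Segre and invoking non-defectivity of $(\PP^1)^{m-1}$ --- is precisely where the factor-of-two loss occurs: one needs $S^k((\PP^1)^{m-1})$ not to fill $\PP^{2^{m-1}-1}$, which forces $(k+1)m\leq 2^{m-1}$. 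This is exactly the paper's route to Theorem~\ref{main} (formalized there as the lemma that $Y\times\PP^q$ is $k$-identifiable whenever $Y$ is not $k$-defective and $kd+k+d<s$), and the paper explicitly acknowledges that this leaves the range between $2^{m-1}/m$ and $2^m/(m+1)$ untouched (``still more or less half way from the maximum'').

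So the gap is twofold: you have confused the conjecture with the theorem, and the argument you outline --- even if its Bertini/case-analysis steps were filled in --- would reprove Theorem~\ref{main} rather than settle Conjecture~\ref{conge}.
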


\section{Geometric background}\label{background}

In this section, we collect some known results on secant varieties and Segre products.
We refer to \cite{CC2}, for details and proofs. We work over the complex field 
and we consider the projective space $\PP^r = \PP^r_{\CC}$,
equipped with the tautological line bundle $\OO_{\PP^r}(1)$.

If $Y$ is a subset of $\PP^r$, we denote by $\langle Y \rangle$
the linear span of $Y$. We say that $Y$ is {\it non--degenerate} 
if $\langle Y \rangle=\PP^r$.
A linear subspace of dimension $n$ of $\PP^ r$
will be called a \emph{$n$--subspace} of $\PP^ r$.

Let $X\subset \PP^ r$ be an irreducible, projective, non--degenerate variety of
dimension $m$. For any non--negative integer $k$, 
the {\it $k$--secant variety} of $X$ is the Zariski closure in $\PP^r$ of the union of all
$k$--dimensional subspaces of $\PP^r$ that are spanned by $k+1$
independent points of $X$. We denote it by $S^k(X)$, or $S^k$, if
no confusion arises. $S^k(X)$ can be seen as the closure of the image, 
under the second projection,
of the {\it abstract secant variety}, i.e. the incidence subvariety 
$AbS^k(X)\subset X^{(k)}\times \PP^r$,
$$AbS^k(X) =\{((P_0,\dots,P_k),P): P\in\langle P_0,\dots,P_k\rangle, \mbox{ and the
$P_i$'s are independent}\}.$$
Notice that $AbS^k(X)$ is {\it always} a variety of dimension $mk+m+k$. 
When $X\subset \PP^ r$ is reducible, the same definition of secant variety
holds, except that we only consider linear spaces meeting every component
of $X$. In particular, when $X$ has $k+1$ components, the secant
variety coincides with the {\it join} of the components
(see \cite{Zak}).

\begin{Def} We say that $X$ {\it has $k$-th secant order $\mu$} 
if for a general point $P\in S^k(X)$, there are exactly $\mu$ unordered $(k+1)$-uples
$P_0,\dots,P_k$ of points of $X$ such that $P\in \langle P_0,\dots,P_k\rangle$.
We say that $X$ is (generically) {\it $k$-identifiable}
if it has $k$-th secant order $1$, i.e.
if for a general point $P\in S^k(X)$, there is a unique unordered $(k+1)$-uple
$P_0,\dots,P_k$ of points of $X$ such that $P\in \langle P_0,\dots,P_k\rangle$.
\end{Def}

\begin{Ex}\label{linsp} If $X$ is the union of $k+1$ linearly independent subspaces
of dimension $m$, then $X$ has $k$-th secant order $1$. This is an easy exercise of Linear Algebra, for $k+1=2$. For $k+1>2$,
it follows by induction, by projecting from one linear component of $X$. Rational normal curves in $\PP^{2k+1}$ are the unique irreducible curves with
$k$-th secant order $1$. See e.g. Theorem 3.1 of \cite{CC2}.
\end{Ex}

From the definition of secant varieties, it follows that:
\begin{equation} \label{defect} s^{(k)}(X):= \dim (S^k(X))\leq
 \min\{r,mk+m+k\}.\end{equation}
The right hand side is called the {\it expected dimension} of $S^{k}(X)$.

\begin{Def}
We say that $X$ is {\it $k$--defective} when a strict inequality
holds in (\ref{defect}). 
\end{Def}

\begin{Rmk}
It is clear that $X$ is $k$-identifiable when the projection $AbS^k(X)\to S^k(X)$
is birational. So $X$ cannot be $k$-identifiable when $\dim(AbS^k(X))> s^{(k)}(X)$. In particular, $X$ is not $k$-identifiable when $r<mk+m+k$ or when $X$ is 
$k$-defective.
\end{Rmk}

Let $X\subset \PP^ r$ be a variety. We denote by ${\rm Sing}(X)$ the Zariski-closed
subset of singular points of $X$.
Let $P\in X\setminus{\rm Sing}(X)$ be a smooth point. We
denote by $T_{X,P}$ the embedded tangent space to $X$ at $P$,
which is a $m$--subspace of $\PP^ r$. More generally, if $P_0,\ldots,P_k$
are smooth points of $X$, we will set

$$T_{X,P_0,\ldots,P_k}=\langle \bigcup_{i=1}^ n T_{X,P_i} \rangle.$$
The relations between secant varieties and tangent spaces to $X$
are enlightened by the celebrated Terracini's Lemma: 

\begin{Lem} (see \cite{Terr1} or, for modern versions, \cite{Adl}, 
\cite{Dale1}, \cite{Zak}).
Given a general point $P\in S^k(X)$, lying in the subspace  $\langle P_0,\dots,P_k
\rangle$ spanned by $k+1$ general points on $X$, then the tangent space 
$T_{S^{k}(X),P}$ to $S^{k}(X)$ at $P$ is the span $T_{X,P_0,\dots,P_k}$ 
of the tangent spaces $T_{X,P_0},\dots, T_{X,P_k}$.
\end{Lem}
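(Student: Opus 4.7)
The plan is to parameterize the affine cone $\hat S \subset \CC^{r+1}$ over $S^k(X)$ explicitly and compute the differential of that parameterization at a general point. Writing $\hat X\subset\CC^{r+1}$ for the affine cone over $X$, I introduce the morphism
$$\varphi \colon \hat X^{k+1} \times \CC^{k+1} \longrightarrow \CC^{r+1}, \qquad \varphi\bigl((\hat Q_0,\dots,\hat Q_k);(\mu_0,\dots,\mu_k)\bigr) = \sum_{i=0}^k \mu_i \hat Q_i,$$
whose image is a constructible dense subset of $\hat S$, so $\varphi$ is dominant onto $\hat S$. By genericity of $P_0,\dots,P_k$, one can choose lifts $\hat P_i\in\hat X$ so that $\hat P = \sum_i \lambda_i \hat P_i$ for uniquely determined nonzero scalars $\lambda_i$; set $u_0 = \bigl((\hat P_0,\dots,\hat P_k);(\lambda_0,\dots,\lambda_k)\bigr)$.

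Next I compute $d\varphi_{u_0}$ directly. A tangent vector at $u_0$ is a tuple $(v_0,\dots,v_k;\eta_0,\dots,\eta_k)$ with $v_i\in T_{\hat X,\hat P_i}$ and $\eta_i\in\CC$, and the differential sends it to
$$\sum_{i=0}^k \lambda_i v_i + \sum_{i=0}^k \eta_i \hat P_i.$$
Since each $\lambda_i\neq 0$ and each $\hat P_i\in T_{\hat X,\hat P_i}$ (the tangent cone at a point of an affine cone contains that point), the image of $d\varphi_{u_0}$ coincides exactly with $T_{\hat X,\hat P_0}+\cdots+T_{\hat X,\hat P_k}$, whose projectivization is $T_{X,P_0,\dots,P_k}$. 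The inclusion $T_{X,P_0,\dots,P_k}\subseteq T_{S^k(X),P}$ is then realized by explicit curves: if $\gamma_i(t)\subset\hat X$ satisfies $\gamma_i(0)=\hat P_i$ and $\gamma_i'(0)=v_i$, the curve $t\mapsto\sum_i(\lambda_i+t\eta_i)\gamma_i(t)$ lies in $\hat S$, passes through $\hat P$ at $t=0$, and has derivative $\sum \lambda_i v_i + \sum \eta_i\hat P_i$ there.

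For the reverse inclusion I would invoke generic smoothness of dominant morphisms in characteristic zero: since $\varphi$ is dominant onto the irreducible variety $\hat S$, there is a nonempty open $U\subseteq\hat S$ over which $\varphi$ is smooth, so for $q\in U$ and any $u$ in the smooth locus of $\varphi^{-1}(q)$, the differential $d\varphi_u$ surjects onto $T_{\hat S,q}$. Taking $P$ inside this locus (allowed because $P$ is general), one gets that $T_{\hat S,\hat P}$ equals the image of $d\varphi_{u_0}$ computed above, and this descends to the claimed equality $T_{S^k(X),P}=T_{X,P_0,\dots,P_k}$ after projectivization.

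The main obstacle is precisely this generic smoothness step: one has to guarantee that $u_0$, built from generic $(P_0,\dots,P_k)$ and the generic $\hat P$, actually lies in the locus where $d\varphi$ attains maximal rank, and that this maximal rank equals $\dim\hat S$. This is where characteristic zero and the irreducibility of the relevant spaces enter essentially; once it is granted, the linear-algebraic identification of $d\varphi_{u_0}$ and the curve construction above are purely formal. A small bookkeeping point is the passage between the affine cone and $\PP^r$, handled cleanly either by working in an affine chart of $\PP^r$ or by quotienting out the Euler vector fields.
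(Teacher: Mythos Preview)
Your argument is correct and is essentially the standard modern proof of Terracini's Lemma: parameterize the affine cone over $S^k(X)$ by $\varphi$, compute its differential, and invoke generic smoothness in characteristic zero to identify the image of $d\varphi$ with the tangent space of the target at a general point. The only delicate step, which you already isolate, is that the particular point $u_0$ built from general $P_0,\dots,P_k$ and general $P\in\langle P_0,\dots,P_k\rangle$ lies in the open locus where $d\varphi$ has maximal rank; this is indeed guaranteed because the locus in question is a nonempty Zariski open of the irreducible source $\hat X^{k+1}\times\CC^{k+1}$, and the hypotheses let you choose $(P_0,\dots,P_k,\lambda_0,\dots,\lambda_k)$ generically there.

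As for comparison with the paper: the paper does \emph{not} give its own proof of this statement. Terracini's Lemma is quoted as a classical result, with references to Terracini's original paper and to the modern treatments by \AA dlandsvik, Dale, and Zak. Your affine-cone computation is precisely the argument one finds in those modern sources, so there is no alternative approach in the paper to contrast with.
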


Using the correspondence between the abstract secant variety and
$S^k(X)$, one obtains from Terracini's Lemma, a condition for the defectivity
of $X$:

\begin{Thm}\label{def-weakdef} (See \cite{CC2}, Theorem 2.5)
Let $P_0,\dots ,P_k$ be general points of $X$.
If $H$ is a general hyperplane tangent to $X$ at $P_0,\dots ,P_k$, we can consider 
the {\it contact variety} of $H$, i.e. the union $\Sigma$ of the irreducible components of 
${\rm Sing}(X\cap H)$.  If $X$ is $k$-defective, then $\Sigma$ is positive dimensional. 
\end{Thm}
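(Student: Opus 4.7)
The plan is to combine the fact that the abstract secant variety has known dimension $(k+1)m+k$ with Terracini's Lemma, so as to exhibit a positive-dimensional family of points at which a general tangent hyperplane remains tangent. Recall that $X$ being $k$-defective means $s^{(k)}(X)<\min\{r,(k+1)m+k\}$; in particular $s^{(k)}(X)<(k+1)m+k$. Since $AbS^k(X)$ always has dimension $(k+1)m+k$, the natural projection $\phi\colon AbS^k(X)\to S^k(X)$ therefore has fibers of positive dimension $(k+1)m+k-s^{(k)}(X)$ over a general point.

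Fix a general point $P\in S^k(X)$ and set $F=\phi^{-1}(P)$. Every tuple $(Q_0,\dots,Q_k)$ occurring in $F$ spans a $k$-secant space through $P$, so Terracini's Lemma applied to each such tuple yields
$$T_{X,Q_0,\dots,Q_k}=T_{S^k(X),P},$$
a \emph{fixed} linear space independent of the tuple in $F$. In particular every tangent space $T_{X,Q_i}$ with $(Q_0,\dots,Q_k)\in F$ is contained in $T_{S^k(X),P}$.

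Now let $(P_0,\dots,P_k)\in F$ be one such tuple and let $H$ be a general hyperplane tangent to $X$ at $P_0,\dots,P_k$. Being tangent at every $P_i$ is equivalent to the inclusion $H\supseteq T_{X,P_0,\dots,P_k}$, which by Terracini equals $T_{S^k(X),P}$; thus $H$ is merely a general hyperplane through the fixed subspace $T_{S^k(X),P}$. The observation of the previous paragraph then shows that $H$ contains $T_{X,Q_i}$ for every $i$ and every tuple $(Q_0,\dots,Q_k)$ varying in $F$. Consequently each such smooth point $Q_i$ is a singular point of the hyperplane section $X\cap H$, hence lies in the contact variety $\Sigma$.

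Since $F\subset X^{k+1}$ has positive dimension, at least one of the coordinate projections $F\to X$ has a positive-dimensional image; this image is contained in $\Sigma$, and therefore $\dim\Sigma>0$, as required. The delicate point, which I expect to be the main obstacle to make rigorous, is the identification ``general hyperplane tangent at the $P_i$'s = general hyperplane through $T_{S^k(X),P}$''; this requires the \emph{equality} provided by Terracini's Lemma, not just the inclusion $T_{X,P_0,\dots,P_k}\subseteq T_{S^k(X),P}$, and one must also check that genericity of $P$ in $S^k(X)$ propagates to genericity of $H$ within the relevant linear system, so that the components of $\mathrm{Sing}(X\cap H)$ really are covered by the family produced from $F$.
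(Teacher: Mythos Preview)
The paper does not give its own proof of this statement: it is quoted as background and attributed to \cite{CC2}, Theorem~2.5. Your argument is the standard one behind that result and is essentially correct --- use the dimension gap between $AbS^k(X)$ and $S^k(X)$ to produce a positive-dimensional fiber $F$, then invoke Terracini so that a hyperplane through $T_{S^k(X),P}$ is tangent along every coordinate of every tuple in (an open dense part of) $F$.

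The caveats you raise are exactly the right ones, and they are all absorbed by working in characteristic zero. Generic smoothness of $\phi\colon AbS^k(X)\to S^k(X)$ guarantees that for general $P$ the fiber $F$ is reduced of the expected positive dimension and that a dense open subset of $F$ lies in the locus where Terracini gives the \emph{equality} $T_{X,Q_0,\dots,Q_k}=T_{S^k(X),P}$; this is what lets you identify ``general $H$ tangent at $P_0,\dots,P_k$'' with ``general $H$ through $T_{S^k(X),P}$''. Your final step, that a positive-dimensional $F\subset X^{k+1}$ must project to something positive-dimensional in at least one factor, is fine since $F$ embeds in the product of its coordinate images.
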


\noindent The previous Theorem suggests a refinement of the notion 
of defective variety.

\begin{Def} An irreducible, non--degenerate variety $X\subset\PP^r$
such that $s^{(k)}(X)<r$ is {\it $k$--weakly defective} if for
$P_0,...,P_k\in X$ general points, the general hyperplane $H$
containing $T_{X,P_0,...,P_k}$ is tangent to $X$ along a variety
$\Sigma(H)$ of positive dimension. $\Sigma(H)$ is called the {\it
$(k+1)$--contact variety} of $H$.\
\end{Def}

\noindent It turns out that $k$-defective implies $k$--weakly defective,
but the converse is false. We refer to \cite{CC0} and \cite{CC2}
for a discussion on the subject.

The main link between identifiability and weakly defective varieties 
lies in the following:

\begin{Thm}\label{mu1} (See \cite{CC2}, Corollary 2.7)
Let $X\subset\PP^r$ be an irreducible,
projective, non--degenerate variety of dimension $m$. Assume $
mk+m+k<r$. Then $X$ is $k$-identifiable, unless it is
$k$--weakly defective. \end{Thm}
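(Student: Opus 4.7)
The plan is to prove the contrapositive: assuming $X$ is not $k$-identifiable (with $mk+m+k<r$), I produce a positive-dimensional contact variety $\Sigma(H)$ for a general hyperplane tangent to $X$ at $k+1$ general points, showing that $X$ is $k$-weakly defective. First reduce to the non-$k$-defective case: if $X$ were $k$-defective, Theorem \ref{def-weakdef} would already yield $k$-weak defectivity. So assume $\dim S^k(X)=mk+m+k$; then the map $\phi:AbS^k(X)\to S^k(X)$ is generically finite, and non-identifiability forces $\deg\phi\geq 2$. Consequently, for general points $P_0,\dots,P_k\in X$ and a general $P$ in $L:=\langle P_0,\dots,P_k\rangle$, there is a second $(k+1)$-tuple $(Q_0(P),\dots,Q_k(P))$, distinct from $(P_0,\dots,P_k)$ as an unordered set, depending algebraically on $P$ through a local branch of $\phi$, with $P\in\langle Q_0(P),\dots,Q_k(P)\rangle$.

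The Terracini step is the heart. Set $T=T_{X,P_0,\dots,P_k}$, so $T=T_{S^k(X),P}$ by Terracini. For any $(k+1)$-tuple of smooth points $R_0,\dots,R_k$ with $P\in\langle R_0,\dots,R_k\rangle$, an infinitesimal deformation of each $R_i$ inside $X$ shows $T_{X,R_i}\subseteq T_{S^k(X),P}$; in particular $T_{X,Q_i(P)}\subseteq T$ for every $i$. Since $\dim T\leq mk+m+k<r$, hyperplanes $H\supseteq T$ exist; for every such $H$, $T_{X,Q_i(P)}\subseteq H$, so $H$ is tangent to $X$ at $Q_i(P)$, and hence $Q_i(P)\in \mathrm{Sing}(X\cap H)=\Sigma(H)$.

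Next I vary $P$ inside the $k$-dimensional $L$. If, for some index $i$, the rational map $L\to X$, $P\mapsto Q_i(P)$, is non-constant, its image is a positive-dimensional subvariety of $X$ contained in $\Sigma(H)$ for every $H\supseteq T$, and in particular for a general such $H$. This positive-dimensional $(k+1)$-contact variety is precisely what the definition of $k$-weak defectivity requires, and we conclude.

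The main obstacle is the opposite, ``constant'' scenario, where $Q_i(P)$ is independent of $P\in L$ for every $i$. Then $\langle Q_0,\dots,Q_k\rangle\supseteq L$, so by dimension $\langle Q_0,\dots,Q_k\rangle=L$, and $X\cap L$ contains at least $2(k+1)$ distinct points for generic $(P_0,\dots,P_k)$. To still extract weak defectivity, I would let $(P_0,\dots,P_k)$ vary in $X^{(k+1)}$: the correspondence sending $(P_0,\dots,P_k)$ to the associated unordered $\{Q_0,\dots,Q_k\}$ is algebraic with positive-dimensional domain, and combining the Terracini inclusion $T_{X,Q_j}\subseteq T$ with monodromy transitivity on the fibers of $\phi$, one argues that the tangency locus of a general $H\supseteq T$ cannot be confined to the finite set $\{P_i,Q_j\}$. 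The resulting positive-dimensional component of $\Sigma(H)$ again yields $k$-weak defectivity, completing the proof.
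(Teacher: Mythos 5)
First, a point of comparison: the paper does not prove this statement at all --- Theorem \ref{mu1} is quoted from \cite{CC2}, Corollary 2.7 --- so there is no internal proof to measure yours against, and I am judging the proposal on its own terms.

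The architecture is the right one and the first half is sound: the reduction to the non-defective case, the ``easy'' Terracini inclusion $T_{X,Q_i(P)}\subseteq T_{S^k(X),P}=T_{X,P_0,\dots,P_k}$ valid for every decomposition of a general (hence smooth) point $P$ of $S^k(X)$, and the conclusion of weak defectivity when the second decomposition moves with $P\in L$ are all correct. The genuine gap is the ``constant'' scenario, which your last paragraph gestures at but does not resolve. There you must exclude (or convert into weak defectivity) the possibility that the $k$-plane $L$ spanned by $k+1$ general points of $X$ contains a second, fixed $(k+1)$-tuple $\{Q_0,\dots,Q_k\}\subset X$ spanning $L$, with each $T_{X,Q_j}\subseteq T$. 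Nothing you have written rules this out: a priori the general $(k+1)$-secant $k$-plane could meet $X$ in more than $k+1$ points (already for curves, excluding this is the content of general-position/trisecant-type theorems, which are not free), and ``monodromy transitivity on the fibers of $\phi$'' only yields that $(Q_0,\dots,Q_k)$ is itself a general point of $AbS^k(X)$ --- which is perfectly consistent with the contact locus of a general $H\supseteq T$ being the \emph{finite} set $\{P_i\}\cup\{Q_j\}$. In that situation $X$ would fail to be $k$-identifiable while also failing to be $k$-weakly defective, i.e.\ the case you have left open is essentially the whole content of the theorem. In \cite{CC2} (building on the techniques of \cite{CC0}) this is handled by proving that when $X$ is not $k$-weakly defective the contact locus of the general hyperplane containing $T$ is \emph{exactly} $\{P_0,\dots,P_k\}$, with no extra isolated points; the standard route is to project from $T_{X,P_1,\dots,P_k}$ and invoke reflexivity of the dual variety (a general tangent hyperplane of a variety whose dual is a hypersurface is tangent at a single point). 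Until an argument of this kind is supplied, the proof is incomplete.
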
 

\begin{Thm}\label{mu2} (See \cite{CC2}, Theorem 2.4)
Let $X\subset\PP^r$ be an irreducible,
projective, non--degenerate variety of dimension $m$. Assume $
mk+m+k<r$ and assume that $X$ is $k$-weakly defective. 
Call $\Sigma$ a general $(k+1)$-th contact variety.
Then, the $k$-th secant order of $\Sigma$ is equal to the $k$-th
secant order of $X$. 
\end{Thm}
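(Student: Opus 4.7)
The plan is to fix a general $P \in S^k(X)$ and establish a bijection between the $(k+1)$-tuples of points of $X$ whose span contains $P$ and the analogous tuples on $\Sigma$. Since $\Sigma\subset X$, one direction is tautological, so the whole argument reduces to proving that every decomposition of $P$ by points of $X$ is actually supported on $\Sigma$.

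To make this concrete, fix one decomposition $P\in\langle P_0,\dots,P_k\rangle$, pick a general hyperplane $H\supset T_{X,P_0,\dots,P_k}$, and let $\Sigma=\Sigma_0\cup\cdots\cup\Sigma_k$ be the resulting contact variety, with each $\Sigma_i$ the positive-dimensional component through $P_i$ (positivity being guaranteed by $k$-weak defectivity). By Terracini's lemma, $T_{X,P_0,\dots,P_k}=T_P S^k(X)$, so this subspace (and hence the admissible family of $H$'s) depends only on $P$ and not on the chosen decomposition. For any other decomposition $(Q_0,\dots,Q_k)$ of $P$, Terracini again gives $T_{X,Q_i}\subset T_P S^k(X)\subset H$; hence $H$ is tangent to $X$ at each $Q_i$, so each $Q_i$ is a singular point of $X\cap H$ and therefore lies on $\Sigma$.

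The main obstacle I expect is to verify that, for a general $P$, the points $Q_0,\dots,Q_k$ lie on \emph{distinct} components of $\Sigma$, since the secant variety of the reducible $\Sigma$ is defined as the join of its $k+1$ components and thus requires one point per component. I would handle this by a genericity argument: the locus of decompositions placing two of the $Q_i$'s inside a common $\Sigma_j$ is a proper closed subset of the fiber of the abstract secant map over $P$; the distinguished decomposition $(P_0,\dots,P_k)$ already distributes one point per component, and an open-set argument propagates the same pattern to every decomposition lying over a general $P$. Once this is secured, unordered $(k+1)$-tuples of $X$ spanning $P$ correspond bijectively to unordered transversals of $(\Sigma_0,\dots,\Sigma_k)$ spanning $P$, and the two $k$-th secant orders coincide after a standard semicontinuity step identifying the count at $P$ with the generic count on $S^k(\Sigma)$.
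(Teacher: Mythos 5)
First, a point of comparison: the paper does not prove this statement at all --- it is quoted from \cite{CC2}, Theorem 2.4 --- so there is no in-paper argument to measure yours against. That said, your opening move is the correct one and is indeed the heart of the argument in \cite{CC2}: by the inclusion half of Terracini's Lemma (valid for an arbitrary decomposition of a smooth point of $S^k(X)$), any decomposition $P\in\langle Q_0,\dots,Q_k\rangle$ satisfies $T_{X,Q_i}\subseteq T_{S^k(X),P}=T_{X,P_0,\dots,P_k}\subseteq H$, so every decomposition of a general $P$ is supported on the contact locus of every admissible $H$. This direction is sound.

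The two points you yourself flag as obstacles are, however, exactly where the proposal has genuine gaps. (i) The genericity argument for the one-point-per-component distribution proves nothing as stated: when $X$ is not $k$-defective the fiber of $AbS^k(X)\to S^k(X)$ over $P$ is a \emph{finite} set, and a proper closed subset of a finite set need not be empty, so observing that the bad decompositions form a proper closed subset of the fiber does not exclude them. One needs a monodromy argument on an irreducible incidence variety of decompositions and tangent hyperplanes (in the spirit of Remark \ref{monod}), or a dimension count. (ii) ``Semicontinuity'' does not identify the number of decompositions of $P$ supported on $\Sigma$ with the $k$-th secant order of $\Sigma$: the cardinality of fibers of a generically finite map can jump either up or down at special points (compare a ramification point with the preimage of a node under normalization), so you must actually prove that $P$ is a \emph{general} point of $S^k(\Sigma)$ --- equivalently, that $(P_0,\dots,P_k)$ is a general tuple of points on the components of a general member of the family of contact varieties. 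This is a substantive step, not a formality. (iii) You also presuppose that $\Sigma$ has $k+1$ distinct components $\Sigma_0,\dots,\Sigma_k$, one through each $P_i$; this is false in general, and in this very paper the $4$-th contact locus of $(\PP^1)^5$ is a single irreducible elliptic normal curve through all five points. The irreducible (or fewer-component) case must be treated separately: it dissolves difficulty (i) but leaves difficulty (ii) intact.
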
 

\noindent Thus, a way to prove that a variety $X$ is $k$-identifiable, at least
when $r\neq mk+m+k$, is to prove that $X$ is not $k$-weakly defective,
or, if it is $k$-weakly defective, that the general contact variety 
$\Sigma$ has $k$-th secant order $1$.
\smallskip

The second cornerstone in our theory links $k$-defectivity
and $k$-weakly defectivity with
the existence of degenerate subvarieties, passing through $k+1$ general
points in $X$. Namely, if $X$ is $k$-defective or 
$k$-weakly defective, then it turns out
that the general contact variety  is highly degenerate. 

\begin{Thm}\label{wdef} (See \cite{CC2}, Theorem 2.4 and Theorem 2.5)
Assume $mk+m+k<r$. 
If $X$ is $k$-weakly defective, then a general contact variety $\Sigma$
spans a linear space of dimension $\leq nk+n+k$, where $n=\dim(\Sigma)$.
Moreover, $X$ is $k$-defective if and only if 
$\Sigma$ spans a space of dimension $< nk+n+k$.
\end{Thm}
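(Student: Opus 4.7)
The plan is to apply Terracini's Lemma to the contact variety $\Sigma$ itself, get a bound on its tangent span, and then show that $\Sigma$ is actually contained in that tangent span.

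By the definition of $k$-weak defectivity, the general contact variety $\Sigma$ through $P_0,\ldots,P_k$ has positive dimension and contains all of these $k+1$ points. Replacing $\Sigma$ by the irreducible component carrying them, and using the genericity of the $P_i$'s on $X$, I may assume $P_0,\ldots,P_k$ are general smooth points on the $n$-dimensional irreducible variety $\Sigma$. Applying Terracini to $\Sigma$, for $Q$ general in $\langle P_0,\ldots,P_k\rangle\subseteq S^k(\Sigma)$,
\[
T_{S^k(\Sigma),Q}=T_{\Sigma,P_0,\ldots,P_k}=:W,
\]
so $\dim W\leq n(k+1)+k=nk+n+k$.

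The heart of the argument is the inclusion $\langle\Sigma\rangle\subseteq W$. Since $H$ is tangent to $X$ along the whole of $\Sigma$, every smooth $P\in\Sigma$ satisfies $T_{X,P}\subseteq H$. Varying each $P_i$ inside $\Sigma$ thus produces a positive-dimensional family of $(k+1)$-tuples on $X$ all carrying the \emph{same} tangent hyperplane $H$ to $S^k(X)$, i.e. the Gauss map of $S^k(X)$ has positive-dimensional fiber over $[H]$. The infinitesimal directions to this fiber at $(P_0,\ldots,P_k)$ are exactly $T_{\Sigma,P_0}+\cdots+T_{\Sigma,P_k}$, which together span $W$; integrating these infinitesimal deformations to a global family of secant tuples forces every point of $\Sigma$ into $W$, and hence $\langle\Sigma\rangle\subseteq W$.

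For the ``moreover'' clause: $T_{\Sigma,P_i}\subseteq T_{X,P_i}$ gives $W\subseteq T_{X,P_0,\ldots,P_k}$, and Terracini for $X$ identifies $\dim T_{X,P_0,\ldots,P_k}$ with $s^{(k)}(X)$. Chaining $\langle\Sigma\rangle\subseteq W\subseteq T_{X,P_0,\ldots,P_k}$ with a comparison of the differentials of the secant maps $\Sigma^{(k)}\times\PP^k\dashrightarrow S^k(\Sigma)$ and $X^{(k)}\times\PP^k\dashrightarrow S^k(X)$ at $(P_0,\ldots,P_k,Q)$ shows that $\dim W<nk+n+k$ is equivalent to a collapse in $\dim T_{X,P_0,\ldots,P_k}$ below $mk+m+k$, i.e. to $X$ being $k$-defective. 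The principal obstacle is passing from the easy inclusion $S^k(\Sigma)\subseteq W$ given directly by Terracini to the stronger $\Sigma\subseteq W$; this requires exploiting the \emph{constancy} of the tangent hyperplane as one moves inside $\Sigma$, via a Gauss-map/monodromy argument for $S^k(X)$, rather than Terracini applied formally.
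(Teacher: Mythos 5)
This statement is not proved in the paper at all: it is imported verbatim from \cite{CC2} (Theorems 2.4 and 2.5 there), so there is no in-paper argument to compare against, and your attempt has to stand on its own. It does not, for two concrete reasons. First, the step you yourself identify as ``the heart of the argument'' --- the inclusion $\langle\Sigma\rangle\subseteq W=T_{\Sigma,P_0,\dots,P_k}$ --- is asserted rather than proved. The Gauss-map observation you make (moving the $Q_i$ inside $\Sigma$ keeps the tangent hyperplane $H$, hence the secant spans $\langle Q_0,\dots,Q_k\rangle$ sweep out a positive-dimensional piece of a fibre of the Gauss map of $S^k(X)$) is a legitimate starting point, but if you push it through using linearity of general Gauss fibres, what falls out is that this fibre is a linear space contained in $T_{X,P_0,\dots,P_k}$, giving only $\dim\langle\Sigma\rangle\leq mk+m+k$. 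Cutting the bound from $m$ down to $n=\dim\Sigma$ --- equivalently, showing that the fibre is no larger than $S^k(\Sigma)$ itself, so that $S^k(\Sigma)$ is linear and equals $\langle\Sigma\rangle$ --- is exactly the content of the theorem, and ``integrating these infinitesimal deformations \dots forces every point of $\Sigma$ into $W$'' supplies no mechanism for it. The same gap reappears in your ``moreover'' clause, where the equivalence between $\dim\langle\Sigma\rangle<nk+n+k$ and $k$-defectivity of $X$ is reduced to an unexplained ``comparison of differentials.''

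Second, your opening reduction --- ``replacing $\Sigma$ by the irreducible component carrying them'' --- is not available: the contact variety is frequently reducible with \emph{no} component containing all $k+1$ points. Indeed the case the present paper cares most about is $\Sigma=W_0\cup\dots\cup W_k$ with each $W_i$ a linear space through a single $P_i$ (see Remark \ref{monod} and the proof of the main Lemma). Terracini still applies in that situation (to the join of the components, using the monodromy argument to equidistribute the points), so the bound $\dim W\leq nk+n+k$ survives, but the argument has to be phrased for the join, not for an irreducible $\Sigma$ through $k+1$ general points. As written, the proposal proves neither the span bound nor the defectivity criterion; a correct proof along the lines of \cite{CC0,CC2} uses the linearity of contact loci of general tangent hyperplanes together with projection from $T_{X,P_1,\dots,P_k}$ to control the dimension of the Gauss fibre.
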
 

In conclusion, we obtain:

\begin{Cor} \label{basic}
Assume $r>mk+m+k$. Assume that for all $n=1,\dots,m-1$, there
are no families of $n$-dimensional subvarieties of $X$, whose general element 
spans a linear space of dimension $\leq nk+n+k$ and passes through
$k+1$ general points of $X$. Then $X$ is not $k$-weakly defective.
Hence it is $k$-identifiable.
\end{Cor}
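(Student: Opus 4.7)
The plan is to argue the contrapositive by combining Theorem \ref{wdef} with Theorem \ref{mu1}: assuming $X$ is $k$-weakly defective, I will exhibit exactly a family of the type that the hypothesis of the corollary explicitly forbids. Since the standing assumption $r > mk+m+k$ supplies the dimensional hypothesis of Theorem \ref{mu1}, it will suffice to prove that $X$ is not $k$-weakly defective.

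For the main step, suppose for contradiction that $X$ is $k$-weakly defective. Fix $k+1$ general points $P_0, \dots, P_k \in X$ and let $H$ be a general hyperplane containing $T_{X,P_0,\dots,P_k}$; let $\Sigma = \Sigma(H) \subseteq X$ be the $(k+1)$-th contact variety, which has some dimension $n \geq 1$ by weak defectivity. Theorem \ref{wdef} then immediately supplies the bound $\dim \langle \Sigma \rangle \leq nk+n+k$. The routine observation I would record is that each $P_i$ lies on $\Sigma$: since $H$ is tangent to $X$ at $P_i$, the scheme $X \cap H$ is singular at $P_i$, so $P_i$ belongs to some irreducible component of ${\rm Sing}(X \cap H)$, which by definition is contained in $\Sigma$. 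Letting $(P_0, \dots, P_k)$ vary in an open subset of $X^{k+1}$ and choosing $H$ generically above each configuration produces an algebraic family whose general element is $n$-dimensional, spans a subspace of dimension at most $nk+n+k$, and passes through the chosen $k+1$ general points of $X$.

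The only subtlety is to confirm that $n \leq m-1$, so that the hypothesis of the corollary actually applies to this family. If instead $n = m$, then $\Sigma = X$ (as $X$ is irreducible of dimension $m$), which would force $X \subseteq {\rm Sing}(X \cap H) \subseteq H$ and contradict the non-degeneracy of $X \subseteq \PP^r$. Hence $1 \leq n \leq m-1$, and the family just produced directly contradicts the standing hypothesis of the corollary. Therefore $X$ is not $k$-weakly defective, and Theorem \ref{mu1}, applicable because $r > mk+m+k$, concludes that $X$ is $k$-identifiable. I do not expect any genuine obstacle here: the whole argument is a packaging of Theorems \ref{wdef} and \ref{mu1}, with the minor verification that the contact variety truly passes through the $k+1$ general tangency points and cannot be all of $X$.
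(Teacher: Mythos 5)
Your proposal is correct and follows exactly the route the paper intends: the corollary is stated as an immediate consequence of Theorem \ref{wdef} (a weakly defective $X$ has a positive-dimensional contact variety through the $k+1$ general points spanning a space of dimension at most $nk+n+k$) combined with Theorem \ref{mu1} (not $k$-weakly defective plus $r>mk+m+k$ gives $k$-identifiability). Your added verifications that the $P_i$ lie on $\Sigma$ and that $n\leq m-1$ are sound and merely make explicit what the paper leaves implicit.
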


This is our starting point. In the next sections, we will obtain the
$k$-identifiability of products of $\PP^1$'s, for $k$ in our range, 
by proving that subvarieties of Segre products
$\PP^1\times\dots\times\PP^1$ passing through $k+1$ general points cannot
be too degenerate, unless they are formed by a bunch of 
independent linear spaces. One should observe that both Corollary \ref{basic} and 
the second part of Theorem \ref{mu1} cannot be inverted. 

\begin{Ex} The existence of families of degenerate subvarieties
cannot guarantee that $X$ is $k$-weakly defective. For instance, 
consider $X=\PP^2$ embedded in $\PP^9$ by the $3$-Veronese
embedding. Then $X$ is not $1$-weakly defective. Indeed,
the general hyperplane tangent to $X$ at two general points cuts
a divisor which corresponds, in $\PP^2$, to
a general cubic curve with two singular points. Such a cubic splits in
the union of a conic and a line, and it is reduced. On the other hand, 
through $2$ general points of $X$ one finds a curve
spanning a space of dimension $1\cdot 1+1+1=3$. Namely, it is
 the twisted cubic, image of the line trough the two points.
\end{Ex}

\begin{Ex} When $X$ is $k$-weakly defective,  it can
be $k$-identifiable as well. This may happen, by  \cite{CC2}, Theorem 2.4,
when the contact locus has $k$-th secant order $1$. 
Examples of such varieties can be found in \cite{CC2}, Example 3.7, 
but they are singular. A smooth example was communicated us by G. Ottaviani. 
Take the Segre embedding of $X=\PP^1\times\PP^1\times\PP^2$ in $\PP^{11}$.
Using a computer-aided procedure, one can find that the general
hyperplane which is tangent to $X$ at two points, is indeed tangent
along a twisted cubic. The computation was indeed performed at
two specific points of $X$, but notice that Aut($X$) acts
transitively on pair of points. Thus $X$ is $1$-weakly defective.
Since a twisted cubic curve has first secant order equal to $1$
(Example \ref{linsp}),
it turns out by \cite{CC2}, Theorem 2.4, that $X$ is $1$-identifiable.
The $1$-identifiability of $X$ also follows from the Kruskal's
identifiability criterion for the product of three projective spaces
(see \cite{K}). 
\end{Ex}

As a consequence, one cannot use the inverse of the previous argument
to determine  the non-identifiability
of a variety $X$, simply by studying degenerate subvarieties.

\begin{Rmk} \label{monod}
The degenerate subvarieties $\Sigma$, whose existence is guaranteed by
Corollary \ref{basic} in any weakly defective variety, are not necessarily 
smooth, neither they are necessarily irreducible 
(although one can assume that they are reduced).
On the other hand, one can use a monodromy--type argument 
(see e.g. \cite{CC1}, Proposition 3.1) in order to show that,
when $\Sigma$ is reducible, all the components are interchanged in a flat
deformation, thus they are general members of a flat family. 
This is due to the generality of the points $P_i$'s. 
In particular, we may assume that the components share 
the same geometrical properties, also with respect to the linear series 
induced by the projections to the factors $\PP^1$.
\end{Rmk}

\section{Proof of the Theorem}

Let us start with an useful Lemma of Linear Algebra:

\begin{Lem}\label{linalg} Let $H_0,\dots,H_k$ be subspaces of
$\PP^s$, such that the sum $H_0+\dots+H_k$ is not direct.
Let $p$ be the dimension of the linear span of the $H_i$'s.
Then, for a general choice of points $P_0,\dots,P_k\in \PP^r$
$r\geq 1$, the dimension of the linear span of the spaces 
$H_i\times \{P_i\}\subset \PP^s\times\PP^r$ is at least $p+1$.
\end{Lem}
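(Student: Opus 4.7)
My plan is to translate the statement into linear algebra via the Segre embedding, and then to combine a lower-semicontinuity argument with a single well-chosen specialization of the points $P_i$. Let $V_i\subset\CC^{s+1}$ denote the affine cone of $H_i$, set $V=V_0+\cdots+V_k$ (so $\dim V=p+1$), and pick representatives $w_i\in\CC^{r+1}$ of the $P_i$'s. Under Segre, $H_i\times\{P_i\}$ is the projectivization of $V_i\otimes\CC w_i\subset V\otimes\CC^{r+1}$, so the span of $\bigcup_i(H_i\times\{P_i\})$ is the projectivization of $\sum_i V_i\otimes\CC w_i$. The statement thus reduces to: for generic $w_i$, $\dim_\CC\sum_i V_i\otimes\CC w_i\ge p+2$.

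Next I would observe that the rank of the linear map
$$\Phi_w:\bigoplus_{i=0}^k V_i\longrightarrow V\otimes\CC^{r+1},\qquad (v_i)\longmapsto\sum_i v_i\otimes w_i$$
is a lower-semicontinuous function of the parameter $w=(w_0,\dots,w_k)$, so the generic value of $\dim\mathrm{Im}(\Phi_w)$ is at least its value at any specific choice. It therefore suffices to exhibit a single specialization with $\dim\mathrm{Im}(\Phi_w)\ge p+2$.

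For the specialization, I would use the non-direct-sum hypothesis to produce vectors $u_i\in V_i$, not all zero, with $\sum_i u_i=0$, and fix an index $j$ with $u_j\ne 0$. Since $r\ge 1$, I can choose $\alpha,\beta\in\CC^{r+1}$ linearly independent; then set $w_i=\alpha$ for $i\ne j$ and $w_j=\beta$. Letting $V''=\sum_{i\ne j}V_i$, the linear independence of $\alpha$ and $\beta$ forces
$$\sum_i V_i\otimes\CC w_i\;=\;(V''\otimes\CC\alpha)\;\oplus\;(V_j\otimes\CC\beta),$$
which has dimension $\dim V''+\dim V_j=\dim V+\dim(V_j\cap V'')$. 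The relation $u_j=-\sum_{i\ne j}u_i$ exhibits $u_j$ as a nonzero element of $V_j\cap V''$, so this intersection is at least one-dimensional and the total dimension is at least $p+2$, as required.

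The only genuinely substantive choice is the specialization itself: the naive attempt of setting all $w_i$ equal collapses the sum to $V$, of dimension only $p+1$, so it is crucial to single out an index $j$ that actively participates in the non-direct relation and to place $w_j$ off the common line $\CC\alpha$. Once this observation is made, the rest of the proof is routine tensor-product bookkeeping.
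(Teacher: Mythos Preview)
Your proof is correct and follows essentially the same strategy as the paper's own argument: specialize all but one of the $P_i$ to a common point, use semicontinuity, and exploit the non-direct-sum hypothesis to pick the distinguished index so that the extra dimension appears. The paper phrases this in projective language (showing $\langle (H_0\times P_0)\cup (L\times P_1)\rangle$ has dimension $d+e+1\ge p+1$, where $L=\langle H_1\cup\dots\cup H_k\rangle$, and then specializing $P_2,\dots,P_k$ to $P_1$), whereas you make the tensor-product translation and the rank count $\dim V''+\dim V_j=\dim V+\dim(V_j\cap V'')$ explicit, but the underlying idea and the choice of specialization are the same.
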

\begin{proof} We may assume that $H_0$ meets the span
$L=\langle H_1\cup\dots\cup H_k\rangle$. If $d=\dim(H_0)$
and $e=\dim(L)$, then by assumption $\langle H_0\cup L\rangle$
has dimension at most $d+e$, while for a general choice of the points
$P_0$, $P_1$,
$$\dim\langle (H_0\times P_0)\cup (L\times P_1)\rangle =d+e+1,$$
for the two spaces belong to linearly independent copies 
of $\PP^s$ in the product.
Now, the claim follows by specializing all $P_2,\dots, P_k$ to $P_1$.
\end{proof}

The proof of our Main Theorem now follows soon by the main result of \cite{CGG2}
and by the following general observation.

\begin{Lem} Let $Y\subset \PP^s$ be a non-degenerate variety of dimension $d$
which is not  $k$-defective ($k\geq 1$). Assume $kd+k+d<s$.
Then $X=Y\times \PP^q$ ($q\geq 1$) is $k$-identifiable. 
\end{Lem}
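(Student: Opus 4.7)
The plan is to apply Theorem \ref{mu1} and Theorem \ref{mu2} to reduce the identifiability of $X=Y\times\PP^q$ to an analysis of a general contact variety, then to exploit the Segre tensor structure together with the non-$k$-defectivity of $Y$.

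The Segre embeds $X$ in $\PP^{(s+1)(q+1)-1}$, and a short computation from $kd+k+d<s$ gives $k(d+q)+k+(d+q)<(s+1)(q+1)-1$, so by Theorem \ref{mu1} the variety $X$ is $k$-identifiable unless it is $k$-weakly defective; in the latter case, Theorem \ref{mu2} reduces matters to showing that a general $(k+1)$-th contact variety $\Sigma$ has $k$-th secant order $1$. I would therefore assume $X$ is $k$-weakly defective, with $\Sigma$ cut out by a general hyperplane $H$ tangent at general base points $P_i=(Q_i,R_i)$, $\dim\Sigma=n\geq 1$ and $\dim\langle\Sigma\rangle\leq nk+n+k$ (Theorem \ref{wdef}). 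Writing $H$ as a bilinear form $f$ on $\CC^{s+1}\otimes\CC^{q+1}$, the fact that $T_{\PP^q,R}=\PP^q$ means that tangency of $H$ at $(Q,R)$ forces $H$ to contain the entire fiber $\{Q\}\times\PP^q$, i.e.\ $f(Q,\cdot)=0$. Hence $\Sigma$ is saturated over $Z:=\pi(\Sigma)\subset Y$ (with $\pi:X\to Y$ the first projection) with linear fibers $F_Q:=\{R\in\PP^q:f(v,R)=0\text{ for all }v\in T_{Y,Q}\}$: for every $Q\in Z$, the whole $\{Q\}\times F_Q$ is contained in $\Sigma$.

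The heart of the argument is to prove $Z=\{Q_0,\ldots,Q_k\}$. Suppose on the contrary that a component $Z_i\subset Z$ through $Q_i$ has positive dimension $d'\geq 1$. By Remark \ref{monod}, all $k+1$ such components have the same dimension $d'$ and are general members of a flat family. Non-$k$-defectivity of $Y$ gives that the tangent spaces $T_{Y,Q_i}$ sum directly modulo $\mathrm{span}(Q_0,\ldots,Q_k)$, and by the generality of the $Z_i$ the spans $\langle Z_i\rangle\subset\PP^s$ inherit an analogous direct-sum property. Choosing enough general points on each $Z_i$ and using the fiber saturation of $\Sigma$, one builds a configuration of linear pieces $\{Q\}\times F_Q\subset\Sigma$ lying in pairwise linearly independent ``columns'' of the Segre; a dimension count employing Lemma \ref{linalg} then shows this configuration spans strictly more than $nk+n+k$, contradicting Theorem \ref{wdef}. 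Once $Z=\{Q_0,\ldots,Q_k\}$, the contact variety $\Sigma=\bigcup_{i=0}^k\{Q_i\}\times F_{Q_i}$ is a disjoint union of $k+1$ linear subspaces sitting in $k+1$ linearly independent columns of the Segre (since $Q_0,\ldots,Q_k$ are linearly independent in $\PP^s$); by Example \ref{linsp} it has $k$-th secant order $1$, and Theorem \ref{mu2} concludes.

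The hardest step is the dimension count ruling out positive-dimensional $Z$: the fiber $F_Q$ varies with $Q\in Z_i$, so one must carefully track how the pieces $\{Q\}\times F_Q$ assemble inside the Segre. A case distinction based on whether $d\leq q$ or $d\geq q+1$ is likely needed, since the generic dimension of $F_Q$ depends on this regime. In the extreme $d\geq q+1$, the generic $F_Q$ is already empty, which by itself forces $\Sigma$ to reduce to the points $P_i$ and rules out weak defectivity directly; the hard sub-case is $d\leq q$ with positive-dimensional $Z_i$, which is precisely where the full strength of Lemma \ref{linalg} is needed to turn the direct-sum property of the $\langle Z_i\rangle$'s into the span inequality contradicting Theorem \ref{wdef}.
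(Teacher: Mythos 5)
Your reduction via Theorems \ref{mu1}, \ref{mu2} and \ref{wdef} is the same as the paper's, and your observation that tangency of $H$ at $(Q,R)$ forces $f(Q,\cdot)\equiv 0$ (so that the contact locus is fibered over $Z=\pi(\Sigma)$ with linear fibers $F_Q$) is correct. But the step you yourself identify as the heart of the argument --- proving $Z=\{Q_0,\dots,Q_k\}$ --- is not only left as a sketch, it is false in the generality needed. Consider $\Sigma=\bigcup_{i=0}^k L_i\times\{R_i\}$, where each $L_i\subset Y$ is a linear $n$-space through $Q_i$ and the $L_i$ are linearly independent in $\PP^s$ (such $L_i$ exist through general points precisely in the case the lemma is applied to, $Y=(\PP^1)^{m-1}$, which contains lines through every point). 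This configuration is consistent with your fiber saturation (take $F_Q=\{R_i\}$ for $Q\in L_i$), it spans a space of dimension exactly $nk+n+k$, and Lemma \ref{linalg} gives nothing because its hypothesis fails: the spans $\langle L_i\rangle$ sum \emph{directly}. So no contradiction with Theorem \ref{wdef} arises, and positive-dimensional $Z$ cannot be excluded by any span count. Your sub-claim for $d\geq q+1$ is also a non sequitur: generic emptiness of $F_Q$ on $Y$ only forces $Z$ into the special locus where $f(Q,\cdot)\equiv 0$ and $F_Q\neq\emptyset$, which can perfectly well be positive-dimensional.

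The paper's proof does not try to rule this case out. It projects to $Y$, sets $n'=\dim\pi(\Sigma)$, and runs a case analysis on $0<n'<n$, $n'=n>0$, $n'=0$; the span counts kill the first case, while the surviving configurations in the other two cases are shown to be unions of $k+1$ linearly independent $n$-planes --- including components lying in fibers of $X\to\PP^q$ (constant in the $\PP^q$ direction, positive-dimensional in $Y$), exactly the configuration above. Identifiability then follows not because weak defectivity is excluded, but because such a union of independent linear spaces has $k$-th secant order $1$ (Example \ref{linsp}), so Theorem \ref{mu2} applies. To repair your argument you would need to add this case to your final dichotomy rather than attempt to contradict it.
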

\begin{proof}
If $X$ is $k$-weakly defective, then by Theorem \ref{wdef}
 the $(k+1)$-contact locus is a 
subvariety $W$ of some dimension $n>0$, contained in $\PP^{nk+n+k}$,
which passes through $k+1$ general points of $X$. 
Assume that such a variety exists. Call $W'\subset Y$ the image of $W$
in the projection $X\to Y$ and call $n'=\dim(W')$.
Since $W'$ passes through $k+1$ general points of $Y$, and $Y$
is not $k$-defective, then the span of $W'$ has dimension at least
$n'k+n'+k$, by the second part of Theorem \ref{wdef}.
Now, notice that the fibers of the projection $W\to W'$ span a space of
dimension at least $n-n'$ in $\PP^q$. It follows, by Linear Algebra,
that $W$ spans a space of dimension at least $(n-n'+1)(n'k+n'+k)$.

Now, we have to study several cases.
Assume $0<n'<n$. Then $n'(n-n')\geq n-n'$, so that $(n-n'+1)n'\geq n$.
Moreover $(n-n'+1)k>k$. It follows that 
$(n-n'+1)(n'k+n'+k)$ is bigger than $nk+n+k$, so we get a
contradiction.

Assume $n=n'>0$. By construction, the linear series $L$ which sends $W$ to
$\PP^s$, passing through the embedding $W\subset X$ and the projection
$X\to Y$, has dimension equal to the span of $W'$. Hence it has
dimension $kn+n+k$, in our case.  Call $L'$ the linear series defining
the projection $W\to \PP^q$. If the image of $W$ in $\PP^q$ has dimension
at least $1$, then the embedding of $W$ in $\PP^s\times\PP^q$ is given
by a series $L+L'$, whose dimension is at least $\dim(L)+1=nk+n+k+1$.
It follows that $W$ spans a space of dimension at least $nk+n+k+1$,
a contradiction. Since $W$ passes through $k+1\geq 2$ points of $X$,
its image into $\PP^q$ can be trivial only when $W$ is given by 
$k+1$ components, $W=W_0\cup\dots\cup W_k$ and  each $W_i$ is contained 
in a fiber of  the projection $X\to \PP^q$.
By monodromy (Remark \ref{monod}), each $W_i$
has the same dimension $n$ and spans a space of the same dimension 
$q'$, in the fiber. Call $H_0,\dots, H_k$ the projections of these
spaces to $\PP^s$.  Since $W'$ spans a space of dimension
$nk+n+k$, then the $H_i$'s span a space of the same dimension.
Thus, if the $H_i$'s are not linearly independent, then 
$W$ spans a space of dimension at least $nk+n+k+1$, by Lemma
\ref{linalg}, a contradiction.
It follows that the span of the $H_i$'s has dimension 
$nk+n+k=q'k+q'+k$, so that $q'=n$. This means that 
each $W_i$ projects to a subspace $H_i$ of dimension $n$
in $\PP^s$. Since each $W_i$ sits in a fiber of $X\to \PP^q$,
this implies that each $W_i$ is linear,
and these subspaces are independent.  

Assume $n'=0$. Then necessarily $W$ consists of $k+1$ components
$W=W_0\cup\dots\cup W_k$, each $W_i$ being contained in a fiber of 
the projection. As above, it turns out that $W$ spans a space of dimension 
$kq'+q'+k$. Thus $q'>n$ yields a contradiction. Hence $q'=n$, so that every
$W_i$ is linear.

It follows, from the previous analysis, that necessarily  
$W$ is the union of $k+1$ linearly 
independent subspaces of dimension $n$.
We get then that either $X$ is not $k$-defective, so it is $k$-identifiable by
Theorem \ref{basic}, or it is $k$-defective, and the $(k+1)$-contact variety
$W$ is the union of  $k+1$ linearly independent subspaces of dimension $n$.
In the latter case, the $k$-th secant order of $W$ is known to be $1$
(see Example \ref{linsp}).
Thus $X$ is $k$-identifiable, by Theorem \ref{mu2}.
\end{proof}

{\bf Proof of the main Theorem}
Let $k$ the any positive integer such that  $(k+1)m<2^{m-1}-1$, 
so that the $k-$secant variety of $(\PP^1)^{m-1}$ cannot span $\PP^{2^{m-1}-1}$.
By the main result of \cite{CGG2}, $(\PP^1)^{m-1}$ is not $k-$defective.
Then the previous Lemma implies that $(\PP^1)^m$ is $k$-identifiable.
\hfill\qed
\vskip0.3cm
Indeed, the previous Lemma also prove the following, stronger
statement:

\begin{Thm} 
Let $X$ be a product of $m>5$ copies of $\PP^1$, embedded in the projective
space $\PP^r$, $r=2^m-1$, by the standard Segre embedding.
If $k+1\leq 2^{m-1}/m$, then $X$ is not $k$-weakly defective.
\end{Thm}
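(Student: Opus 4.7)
The plan is to strengthen the Lemma just proved, by ruling out the ``contact variety is a union of $k+1$ linearly independent linear subspaces'' case that its proof still leaves open. If this case cannot occur, then the assumed $k$-weak defectivity yields no contact variety at all, a contradiction.

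First, I would apply the Lemma's analysis verbatim, writing $X=(\PP^1)^m$ as $Y\times\PP^1$ with $Y=(\PP^1)^{m-1}$. The hypothesis $k+1\leq 2^{m-1}/m$ gives $(k+1)m\leq 2^{m-1}$, which is exactly the numerical condition needed for $Y$ to be non-$k$-defective by \cite{CGG2} (valid since $m>5$) and for the inequality $kd+d+k<s$ to hold. The case analysis in the Lemma's proof then forces any $(k+1)$-contact variety $W$ to be a disjoint union $W_0\cup\dots\cup W_k$ of $k+1$ linearly independent linear subspaces of $\PP^r$ of some positive dimension $n$, with $P_i\in W_i$. A short geometric argument identifies these pieces: any line contained in the Segre image of $(\PP^1)^m$ is a factor-line, since the restriction of $\OO(1)$ to the line forces the parametrization to have degree one in exactly one factor, and a $2$-plane on the Segre would contain a $\PP^1$-pencil of such lines through each of its points, contradicting the fact that only $m$ factor-lines pass through any given point of $X$. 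Hence $n=1$ and each $W_i$ is the factor-line $L_{j_i}(P_i)$ obtained by varying the $j_i$-th coordinate at $P_i$. By monodromy (Remark \ref{monod}), all the $W_i$ share the same type with respect to the projections to the factors, so we may fix a single direction $j$ with $W_i=L_j(P_i)$ for every $i$.

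The next step is a dimension count for the hyperplanes tangent along this union of factor-lines. Factor the ambient tensor space as $\CC^{2^m}=W\otimes V_j$, with $V_j=\CC^2$ the $j$-th factor and $W\cong\CC^{2^{m-1}}$ the tensor of the remaining ones, and let $w_i\in W$ be the image of $P_i$ under the projection $X\to(\PP^1)^{m-1}$. A direct computation shows that a linear form $\ell$ is tangent to $X$ at every point of $L_j(P_i)$ if and only if $\ell$ vanishes on $\widetilde{T}_{w_i}\otimes V_j$, where $\widetilde{T}_{w_i}$ denotes the $m$-dimensional affine tangent cone of $(\PP^1)^{m-1}\subset W$ at $w_i$. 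Non-$k$-defectivity of $(\PP^1)^{m-1}$ makes $\widetilde{T}_{w_0}+\dots+\widetilde{T}_{w_k}$ have dimension $(k+1)m$, so tangency along $\bigcup_i L_j(P_i)$ imposes codimension $2(k+1)m$ on $\ell$. By contrast, tangency at $P_0,\dots,P_k$ alone imposes codimension only $(k+1)(m+1)$. Since $2(k+1)m>(k+1)(m+1)$ for $m>1$, the subfamily of hyperplanes tangent along the factor-lines is a proper closed subfamily of the tangent-at-points family, and taking the union over the $m$ possible directions $j$ keeps it proper. The general hyperplane tangent at $P_0,\dots,P_k$ therefore cannot be tangent along any factor-line union of the required form, contradicting the structural conclusion of the first step.

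The most delicate piece will be the tensor computation identifying tangency along $L_j(P_i)$ with vanishing on $\widetilde{T}_{w_i}\otimes V_j$, together with the verification that summing over the $k+1$ general points yields the expected codimension. This is exactly where the non-$k$-defectivity of the smaller factor $(\PP^1)^{m-1}$, guaranteed by \cite{CGG2} under the hypothesis $k+1\leq 2^{m-1}/m$, becomes essential — it is what makes the inequality $2(k+1)m>(k+1)(m+1)$ decisive. By comparison, the classification of positive-dimensional linear subspaces of the Segre image as factor-lines and the monodromy reduction to a single direction $j$ are classical and should go through smoothly.
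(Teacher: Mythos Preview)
Your argument is correct, and its opening moves coincide with the paper's: both start from the Lemma's conclusion that any putative contact variety $W$ must be a union of $k+1$ independent linear subspaces, and both observe that the only positive--dimensional linear subspaces of the Segre image of $(\PP^1)^m$ are factor--lines, so $n=1$ and each $W_i$ is some $L_{j_i}(P_i)$.

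From that point on the two proofs diverge. The paper argues qualitatively: invoking the $S_m$--symmetry of the factors, it claims that a general $(k+1)$--tangent hyperplane must in fact be tangent along \emph{all} $m$ factor--lines through each $P_i$; it then runs a propagation argument (replace $P_0$ by a general point $Q$ on one of those lines, repeat) to conclude that $H$ would have to contain all of $X$, which is absurd. Your route is instead a direct codimension count in the tensor model $\CC^{2^m}=W\otimes V_j$: tangency along $\bigcup_i L_j(P_i)$ forces vanishing on $(\sum_i\widetilde T_{w_i})\otimes V_j$, and the non--$k$--defectivity of $(\PP^1)^{m-1}$ from \cite{CGG2} makes this a $2(k+1)m$--dimensional subspace, strictly larger than the $(k+1)(m+1)$--dimensional span of the point--tangent cones. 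Hence the tangent--along--lines locus is a proper closed subset of the tangent--at--points locus, even after taking the finite union over $j$, and the generic $H$ cannot have such a contact variety. Your approach has the virtue of making the role of the hypothesis $k+1\le 2^{m-1}/m$ and of the CGG2 input completely explicit, at the cost of a small tensor computation; the paper's propagation argument is slicker to state but its ``by symmetry'' step is terse and your monodromy justification for a single direction $j$ is arguably cleaner. Both are valid.
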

\begin{proof} We know from the proof of the previous Lemma, that
if $X$ is $k$-weakly defective, then a general hyperplane $H$ tangent at 
$k+1$ general points  $P_0,\dots, P_k$ of $X$, is 
tangent along a union of linear spaces.
Thus it can only be tangent along fibers of some projection $X\to\PP^1$,
because the product does not contain other lines.
This is clear for $\PP^1\times\PP^1$, while for higher
dimensional products one can argue by induction, on
some projection $(\PP^1)^m\to(\PP^1)^{m-1}$.
Thus, by symmetry, $X$ can be $k$-weakly defective
only  when a general $(k+1)$-tangent hyperplane $H$ is tangent along
all the fibers passing through  $P_0,\dots, P_k$.
But then, for a general choice of a point $Q$ in some
fiber passing through $P_0$, a general hyperplane tangent   
to $P_0,P_1,\dots, P_k$ is also tangent at $Q,P_1,\dots,P_k$.
By the same argument, it is also tangent along any fiber 
passing through $Q$.
Arguing again in this way, we get that a general $H$ must
be tangent (thus must contain) any point of $X$. 
An obvious contradiction.
\end{proof}

\section{Results for small $m$}

\begin{Prop} The product $X$ of $5$ copies of $\PP^1$ is not $4$-identifiable.
Through a general point of $S^5(X)$ one finds exactly two $5$-secant, $4$-spaces.
\end{Prop}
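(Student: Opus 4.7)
Our plan is to use the weak-defectivity technology developed in Section \ref{background} to reduce the secant-order computation to a calculation on the contact variety. By \cite{CGG2}, $X=(\PP^1)^5$ is not $4$-defective; hence $\dim S^4(X)=29$, so $S^4(X)$ has codimension $2$ in $\PP^{31}$, and the linear system of hyperplanes tangent to $X$ at five general points $P_0,\ldots,P_4$ is a pencil.

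First I would show that $X$ is $4$-weakly defective by exhibiting a positive-dimensional $5$-contact variety $\Sigma$ through the $P_i$. Since $X$ is not $4$-defective, Theorem \ref{wdef} forces the linear span of $\Sigma$ to have dimension exactly $5\dim\Sigma+4$, and Remark \ref{monod} allows us to assume the irreducible components of $\Sigma$ are monodromy-conjugate. A natural place to search for $\Sigma$ is among rational curves in $X$ of small multidegree --- images of maps $\PP^1\to(\PP^1)^5$ of multidegree $(d_1,\ldots,d_5)$ with small total degree --- or among configurations built from the fibers of coordinate projections $X\to(\PP^1)^4$; the multidegree is then further constrained by the span equality from Theorem \ref{wdef}.

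The second step is to compute the $4$-th secant order of $\Sigma$. By Theorem \ref{mu2} this coincides with the $4$-th secant order of $X$. Depending on the identification of $\Sigma$, the computation reduces either to a classical Sylvester/Waring-rank count on rational components of $\Sigma$ (where binary forms of even degree are known to admit non-unique generic Waring decompositions), or to a linear-algebra analysis via Lemma \ref{linalg} together with Example \ref{linsp} applied to the linear components of $\Sigma$. The outcome should be exactly $2$, which combined with Theorem \ref{mu2} establishes the proposition.

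The main obstacle is the explicit identification of $\Sigma$. The constraint from Theorem \ref{wdef} pins down its span but neither its multidegree nor its number of components; one must carry out a parameter count on low-multidegree curves through five general points of $X$, and possibly verify the structure of the singular locus of a general pencil member at a symbolic configuration (computer-assisted, if needed). Once $\Sigma$ is known, the verification that its $4$-secant order equals exactly $2$ splits into a lower bound (exhibit two distinct decompositions of a general element, showing non-identifiability) and a matching upper bound (from the contact-variety analysis and Theorem \ref{mu2}); obtaining both bounds simultaneously is the heart of the argument.
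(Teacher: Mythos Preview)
Your overall strategy---show $X$ is $4$-weakly defective, identify the contact locus $\Sigma$, and compute its $4$-th secant order via Theorem \ref{mu2}---matches the paper's. The proposal goes astray, however, at the decisive step: the identification of $\Sigma$.

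You propose to search for $\Sigma$ among rational curves of low multidegree or among configurations of linear fibers. Neither candidate can work. By Theorem \ref{wdef} (with $\dim\Sigma=1$) the span of $\Sigma$ must be a $\PP^9$. A rational normal curve in $\PP^9$ has $4$-th secant order $1$, by Example \ref{linsp}; your remark about non-unique Waring decompositions of binary forms applies only to forms of \emph{even} degree, i.e.\ to rational normal curves spanning an even-dimensional space, which is not the case here. Likewise, a union of $5$ independent lines has $4$-th secant order $1$, again by Example \ref{linsp}. So either of your candidates, if it were the contact locus, would force secant order $1$ and contradict the statement you are trying to prove.

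The paper's key observation is that the $5$-contact locus is an \emph{irreducible elliptic normal curve} $C\subset\PP^9$, and for such a curve a general point of its span lies on exactly two $5$-secant $4$-planes (\cite{CC2}, Proposition 5.2). The existence of $C$ through five general points of $X$ is obtained by a geometric construction: one first takes elliptic curve sections $C'$ of $(\PP^1)^3$ through the five projected points (these form a $2$-parameter family), and then lifts to $X=(\PP^1)^3\times(\PP^1)^2$ by choosing a map $C'\to(\PP^1)^2$ sending each $P'_i$ to the corresponding $P''_i$; a dimension count shows this is possible for at least one $C'$ in the family. A computer check then confirms that the contact locus is exactly this curve (and irreducible), pinning the secant order at precisely $2$. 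Without the elliptic-curve insight, your plan cannot produce the answer $2$.
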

\begin{proof} Indeed, we prove that through $5$ general points of
$X$ one can find an irreducible elliptic normal curve $C\subset \PP^9$,
contained in $X$. Since a general point of the $\PP^9$, spanned 
by $C$, sits in exactly two subspaces of dimension $4$, $5$-secant to
an irreducible elliptic normal curve
(by \cite{CC2} Proposition 5.2), it follows that the $4$-th secant order
 of $X$ is at least $2$. In particular, $X$ is $4$-weakly defective,
 by \cite{CC2}, proposition 2.7, and the $4$-th contact locus contains an elliptic
 normal curve as $C$. A computer aided computation, at 
 $5$ specific  points of $X$,  proves that indeed the $5$-contact locus of
 $X$ is exactly an irreducible elliptic normal curve of degree $12$. 
 The computation has been performed with the Macaulay2 Computer
 Algebra package \cite{GS}, with the script described in \cite{BC}.
 Thus  $4$-th secant order  of $X$ is $2$ 
 (by Theorem \ref{mu2}) and the claim is proved.

To prove the existence of the curve $C$ passing through $5$ 
general points $P_0,\dots,P_4$ of $X$, we start with the product of three lines $X'=\PP^1
\times\PP^1\times\PP^1$. Through the $5$ points $P'_0,\dots,P'_4\in X'$,
projection of the $P_i$'s, one can find a $2$-dimensional 
family $\mathcal F$ of elliptic normal curves 
$C'$ of degree $6$.  Indeed $X'\subset\PP^5$ is a sestic threefold with elliptic
curve sections, and there is a $2$-dimensional family of hyperplanes passing
through $5$ general points of $X$. $\mathcal F$ is parametrized by
points of some plane $\Pi$, obtained by projecting $\PP^5$ from
the span of the $P'_i$'s.

Consider now the product $X''$ of the two remaining copies of $\PP^1$,
so that $X=X'\times X''$. We also get $5$ distinguished general points
$P''_0,\dots, P''_4\in X''$. For any curve $C'$ of the family $\mathcal F$, we have
a $7$-dimensional family of embeddings $C'\to X''$. Thus, adding
the automorphisms of $C'$, for $C'\in\mathcal F$
general, we may assume that each $P'_i$, $i=1,\dots,4$,
goes to the corresponding $P''_i$. The condition that $P'_0$ goes to $P''_0$ 
determines two algebraic condition on the family, hence two algebraic curves on 
$\Pi$. Thus, there is at least one curve $C'$ of the family, for which
$P'_0$ goes to $P''_0$. This determines an elliptic normal curve $C$ in $X$,
passing through the $5$ given general points $P_i$'s.
The fact that  $C$ is irreducible, for a general choice
of the points, follows by the computer-aided computation, on a specific example. 
\end{proof}

For $6$ copies of $\PP^1$, the maximal value for which $k$-identifiability
makes sense is $k_{max}=9$.
Our result gives that the product $X$ of $6$ copies of 
$\PP^1$ is  $k$-identifiable, for $k=1,\dots, 5$.
The $k$-identifiability of $(\PP^1)^6$, for $k=6,7,8,9$, can be directly
checked by a computer-aided procedure.
Indeed, the following observation reduces our problem to check
only what happens for the maximal number $k$
such that $mk+m+k<r$.

\begin{Prop}\label{maxk}
Assume that $km+m+k<r$ and $X$ is not $k$-weakly defective. Then
$X$ is not $(k-1)$--weakly defective.
\end{Prop}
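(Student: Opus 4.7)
The plan is to prove the contrapositive: assuming $X$ is $(k-1)$-weakly defective and that $km+m+k<r$, I will show $X$ is $k$-weakly defective. Fix general points $P_0,\dots,P_{k-1}\in X$ and consider the linear system $V\subset(\PP^r)^\vee$ of hyperplanes containing the joined tangent space $T_{X,P_0,\dots,P_{k-1}}$. By $(k-1)$-weak defectivity, the general $H\in V$ is tangent to $X$ along a positive-dimensional contact variety $\Sigma$.

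The crucial structural remark I would exploit is that, for any smooth point $Q\in X$, the containment $H\supset T_{X,Q}$ is a linear and closed condition on $H$. Hence, if it is satisfied by a general member of the linear system $V$ (which is exactly what $Q\in\Sigma$ means), it is automatically satisfied by \emph{every} member of $V$. Applied to every $Q\in\Sigma$, this shows that each element of $V$, not merely the generic one, is tangent to $X$ along all of $\Sigma$; equivalently, the span $\langle T_{X,P_0,\dots,P_{k-1}}\rangle$ equals the span of the tangent spaces at all points of $\Sigma$.

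With this in hand, I would pick one more general point $P_k\in X$ and consider the subsystem $V'\subset V$ consisting of those hyperplanes that also contain $T_{X,P_k}$. The assumption $km+m+k<r$, combined with Terracini's lemma, forces $\dim\langle T_{X,P_0,\dots,P_k}\rangle\le km+m+k<r$, so $V'$ is nonempty. A general $H\in V'$ is then tangent at the $k+1$ general points $P_0,\dots,P_k$, and by the previous paragraph its tangency locus contains $\Sigma$. Since $\Sigma$ is positive-dimensional, this produces a positive-dimensional $(k+1)$-contact variety at the $P_i$'s, so $X$ is $k$-weakly defective, completing the contrapositive.

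The only delicate step is the linearity argument in the second paragraph: one must recognize that the generic $k$-contact locus is in fact the common tangency locus of the \emph{entire} linear system $V$, not only of a single general hyperplane. Once this is seen, the rest is essentially bookkeeping. The dimension hypothesis $km+m+k<r$ enters only to guarantee $V'\neq\emptyset$, which is why the statement genuinely requires a strict dimensional inequality.
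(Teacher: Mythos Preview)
Your contrapositive strategy is sound, but the second paragraph contains a genuine gap. You write that $Q\in\Sigma$ ``is exactly what'' the condition $H\supset T_{X,Q}$ holding for a \emph{general} member of $V$ means. This is not correct: by definition, $(k-1)$-weak defectivity says that for each general $H_0\in V$ the contact variety $\Sigma(H_0)$ is positive-dimensional, but $\Sigma(H_0)$ depends on $H_0$. A point $Q\in\Sigma(H_0)$ only satisfies $T_{X,Q}\subset H_0$ for that single hyperplane, not for a Zariski-open set of hyperplanes in $V$, so your linearity argument does not apply. In fact, the conclusion you draw---that the span $T_{X,P_0,\dots,P_{k-1}}$ is tangent along all of $\Sigma$---is precisely what characterizes the \emph{defective} case; when $X$ is weakly defective but not defective, the contact locus genuinely moves with $H$ and there is no common positive-dimensional tangency locus for the whole system $V$.

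The repair is to replace the linearity argument by upper-semicontinuity of the fiber dimension of the incidence $\{(Q,H)\in X\times V: T_{X,Q}\subset H\}$ over $V$: if the general $H\in V$ has positive-dimensional contact locus, then so does every $H\in V$, in particular every $H$ in the subsystem $V'\subset V$ of hyperplanes also tangent at $P_k$. This is exactly the paper's argument, run directly rather than contrapositively: the general $(k+1)$-tangent hyperplane is a specialization of $k$-tangent hyperplanes, and since the former has zero-dimensional contact locus by hypothesis, so does the general $k$-tangent hyperplane. Once you invoke semicontinuity instead of your linearity claim, the two proofs coincide.
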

\begin{proof}
Fix $k+1$ general points $P_0,\dots,P_k\in X$. The family of hyperplanes
containing the tangent space $T_{X,P_1,\dots,P_k}$ is irreducible,
so a general hyperplane tangent to $X$ at $P_0,\dots,P_k$ is the limit
of a family of hyperplanes tangent at $P_1,\dots,P_k$. Since the general
element of this last family has a zero dimensional contact locus,
the claim follows.
\end{proof}

Now, by Corollary \ref{basic}, it is enough to compute that
some hyperplane tangent to $X$ at some points $P_0,\dots, P_k$,
is in fact tangent only at those $k+1$ points.
Using this procedure with $9$ points of $(\PP^1)^6$, a computer-aided
compution, using the script in \cite{BC},
proves the following:

\begin{Prop} \label{sei} For $m=6$ and for all $k\leq k_{max}= 9$, 
the product $X$ of $6$ copies of $\PP^1$ is  $k$-identifiable. 
\end{Prop}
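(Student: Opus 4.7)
My plan is to combine the main theorem, the descending induction of Proposition \ref{maxk}, and a computer-aided verification, exactly in the spirit of the discussion preceding the statement. For $X = (\PP^1)^6 \subset \PP^{63}$ the main theorem directly gives $k$-identifiability for $k \leq 4$ (since $k+1 \leq 2^5/6 \approx 5.33$). The remaining cases $k = 5, 6, 7, 8, 9$ will be handled by one or two core computations combined with a descent argument.

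The key observation is that Proposition \ref{maxk} provides a descending induction: whenever $mk+m+k<r$, non-$k$-weak-defectivity propagates to non-$(k-1)$-weak-defectivity. For $m=6$ the dimensional hypothesis reads $7k+6<63$, which holds for every $k \leq 8$. Hence it suffices to check a single non-$8$-weak-defectivity statement, and Theorem \ref{mu1} combined with Proposition \ref{maxk} will then yield $k$-identifiability for $k=1,\dots,8$ in one stroke.

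The core computation is exactly as sketched in the paper: choose nine sufficiently random points $P_0,\dots,P_8 \in X$, compute the linear system of hyperplanes containing $T_{X,P_0,\dots,P_8}$, pick a generic member $H$, and verify that the singular locus of $X \cap H$ coincides with the reduced $0$-dimensional scheme $\{P_0,\dots,P_8\}$. By Theorem \ref{def-weakdef} this rules out $8$-weak-defectivity; passing from the explicit specialization to the generic behavior is justified by upper-semicontinuity of the contact-locus dimension. The Macaulay2 \cite{GS} script of \cite{BC} is tailored precisely to this verification. The boundary case $k = 9 = k_{\max}$ sits outside the range $mk+m+k<r$ (one has $7\cdot 9+6=69>63$), so Theorem \ref{mu1} does not apply directly; I would handle it by an analogous companion computation at ten generic points, this time verifying with the same script that the fiber of $AbS^9(X)\to S^9(X)$ over a generic target point consists (set-theoretically) of a single unordered $10$-tuple on $X$, which is the definition of $9$-identifiability.

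The principal obstacle is computational rather than conceptual: both verifications take place in an ambient $\PP^{63}$ with tangent-scheme ideals of substantial size, and one must ensure that the random specialization is generic enough to capture the truly generic behavior. However, the framework of Section \ref{background} reduces everything to finite-dimensional linear algebra and singular-locus computation, well within the reach of current symbolic-algebra systems, and the cited script of \cite{BC} carries out the calculation end-to-end; the only real care needed is the choice of random points (over $\QQ$ or a large prime field) to avoid accidental specializations.
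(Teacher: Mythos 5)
For $k\le 8$ your plan coincides with the paper's: a single Macaulay2 verification at $9$ general points that a general hyperplane containing $T_{X,P_0,\dots,P_8}$ has zero-dimensional contact locus rules out $8$-weak-defectivity via Theorem \ref{def-weakdef}, Proposition \ref{maxk} propagates this down to every $k\le 8$ (the hypothesis $6k+6+k<63$ holds there), and Theorem \ref{mu1} then gives $k$-identifiability. That part is correct and is exactly what the authors do; your remark on semicontinuity justifying the passage from a specific to a generic choice of points is the right (and tacitly used) justification.

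The genuine gap is your treatment of $k=9$. The companion computation you propose --- checking that the fiber of $AbS^9(X)\to S^9(X)$ over a general point is a single unordered $10$-tuple --- cannot succeed: $\dim AbS^9(X)=6\cdot 9+6+9=69$, while $S^9(X)\subseteq\PP^{63}$, so the general fiber of that projection has dimension at least $6$, and $X$ is \emph{not} $9$-identifiable, exactly as the remark in Section \ref{background} (a variety cannot be $k$-identifiable when $r<mk+m+k$) already states. Note that the paper's own formula $k_{max}=\lfloor 2^m/(m+1)\rfloor-1$ gives $k_{max}=8$ for $m=6$, so the value ``$k_{max}=9$'' in the statement is a slip of the paper itself; the correct assertion, and the one your $k\le 8$ argument establishes, stops at $k=8$. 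You should therefore drop the $k=9$ case rather than attempt to verify it computationally. (A minor further point: the main theorem directly covers $k\le 4$, as you say, not $k\le 5$ as the surrounding text asserts, but this is immaterial since the single computation at $k=8$ subsumes everything below it.)
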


For sure, with a more advanced technical equipment, one can 
analyze products with  more copies of $\PP^1$.
Nevertheless, Proposition \ref{sei} already provides an initial 
evidence for our Conjecture \ref{conge}.

\end{document}